\newtheorem{theorem}{Theorem}
\newtheorem{proposition}[theorem]{Proposition}
\newtheorem{corollary}[theorem]{Corollary}
\theoremstyle{remark}
\newtheorem{remark}{Remark}
\DeclareMathAlphabet\mathoo{U}{eur}{b}{n}
\DeclareMathOperator{\Real}{Re}
\DeclareMathOperator{\heaviside}{\eta}
\begin{document}
\setcounter{page}{1}

\title[An estimate of Green's function]{An estimate of Green's function\\ of the problem of bounded solutions\\ in the case of a triangular coefficient}

\author{V.~G. Kurbatov}
 \address{Department of Mathematical Physics,
Voronezh State University\\ 1, Universitetskaya Square, Voronezh 394018, Russia}
% \thanks{$^*$ The first author was supported by the Ministry of Education and Science of the Russian Federation under state order No.~3.1761.2017/4.6.}
\email{\textcolor[rgb]{0.00,0.00,0.84}{kv51@inbox.ru}}
%\cortext[cor1]{Corresponding author}

\author{I.V. Kurbatova}
 \address{Department of Software Development and Information Systems Administration,
Vo\-ro\-nezh State University\\ 1, Universitetskaya Square, Voronezh 394018, Russia}
% \thanks{$^{**}$ The second author was supported by the Russian Foundation for Basic Research under research project No. 19-01-00732 A.}
\email{\textcolor[rgb]{0.00,0.00,0.84}{la\_soleil@bk.ru}}

\subjclass{Primary 65F60; Secondary 97N50}
%97N50 Interpolation and approximation
%65F60 Matrix exponential and similar matrix functions
%65T40 Trigonometric approximation and interpolation

\keywords{bounded solutions problem, Green's function, estimate, Schur decomposition}

\date{\today}

\begin{abstract}
An estimate of Green's function of the bounded solutions problem for the ordinary differential equation $x'(t)-Bx(t)=f(t)$ is proposed.
It is assumed that the matrix coefficient $B$ is triangular.
This estimate is a generalization of the estimate of  the matrix exponential proved by Ch.~F. Van~Loan.
\end{abstract}

\maketitle

\section*{Introduction}\label{s:Introduction}
Any square complex matrix $A$ can be represented in the triangular Schur form $A=Q^{-1}BQ$, where $B$ is triangular and $Q$ is unitary. Moreover, the Schur decomposition is calculated effectively and constitutes the preliminary stage of many algorithms connected with spectral theory. Thus, in numerical analysis, having a matrix $A$, one may assume that its triangular form is known. In its turn, any triangular matrix $B$ can be easily represented as the sum $B=D+N$ of a diagonal matrix $D$ and a strictly triangular matrix $N$. In~\cite{Van_Loan77} it is established the estimate
\begin{equation}\label{e:Loan est}
\lVert e^{At}\rVert=\lVert e^{Bt}\rVert\le e^{\alpha t}\sum_{k=0}^{n-1}\frac{\lVert Nt\rVert^k}{k!},\qquad t\ge0,\tag{$*$}
\end{equation}
where the matrix $A$ has the size $n\times n$ and $\alpha=\max\{\,\Real\lambda:\,\lambda\in\sigma(A)\,\}$. For other estimates of $e^{At}$, see, e.g.,~\cite{Bulgakov80:eng,Gil-LMA-93}, \cite[ch. 9, \S~1]{Kato:eng}. In this paper, we establish an estimate similar to~\eqref{e:Loan est} for Green's function of the bounded solutions problem (Theorem~\ref{t:fin}). Different estimates for Green's function are obtained in~\cite{Bulgakov89a:eng,Godunov86:SMZh:eng,Kurbatov-Kurbatova-QTDS18}.

We consider the differential equation
\begin{equation}\label{e:eq}
x'(t)-Ax(t)=f(t),\qquad t\in\mathbb R,\tag{$**$}
\end{equation}
where $A$ is a square complex matrix.
The \emph{bounded solutions problem} is the problem of finding a bounded solution~$x$ that corresponds to a bounded free term $f$. The bounded solutions problem is closely connected with the problem of exponential dichotomy of solutions. For the discussion of the bounded solutions problem from different points of view and related questions, see~\cite{Akhmerov-Kurbatov,BaskakovMS15:eng,Boichuk-Pokutnii,Chicone-Latushkin99,
Daletskii-Krein:eng,
Godunov94:ODE:eng,Hartman02:eng,Henry81:eng,%Kenney-Laub95,
Massera-Schaffer:eng,
Pankov90:eng,Pechkurov12:eng%,Pechkurov15:eng,Pruss84
} and the references therein.

It is known (Theorem~\ref{t:Green}) that equation~\eqref{e:eq} has a unique bounded solution $x$ for any bounded continuous free term~$f$ if and only if the spectrum of the coefficient $A$ is disjoint from the imaginary axis. In this case, the solution can be represented in the form
\begin{equation*}
x(t)=\int_{-\infty}^{\infty}\mathcal G(A,s)f(t-s)\,ds.
\end{equation*}
The kernel $\mathcal G$ is called \emph{Green's function}.
Assuming that $A$ is triangular or a triangular form of $A$ is known, we establish an effective estimate of Green's function (Theorem~\ref{t:fin}). Estimates of Green's function are important, e.g., for numerical methods~\cite{Bulgakov80:eng,Bulgakov89a:eng,Godunov86:SMZh:eng,
Kurbatov-Kurbatova-CMAM18,Malyshev91:eng}.

\section{The definition of Green's function}\label{s:Green's function}
For $\lambda\in\mathbb C$ and $t\in\mathbb R$, we consider the functions
\begin{align*}
\exp_{t}^+(\lambda)&=\begin{cases}
e^{\lambda t}, & \text{if $t>0$},\\
0, & \text{if $t<0$},\end{cases}\\
\exp_{t}^-(\lambda)&=\begin{cases}
0, & \text{if $t>0$},\\
-e^{\lambda t}, & \text{if $t<0$},\end{cases}
\\
g_t(\lambda)&=\begin{cases}
\exp_{t}^-(\lambda), & \text{if $\Real\lambda>0$},\\
\exp_{t}^+(\lambda), & \text{if $\Real\lambda<0$}.
\end{cases}
\end{align*}
These functions are undefined for $t=0$. The function $g_t$ is also undefined for $\Real\lambda=0$. For any fixed $t\neq0$, all three functions are analytic on their domains.

Let $A$ be a complex matrix of the size $n\times n$.
We consider the differential equation
\begin{equation}\label{e:x'=Ax+f}
x'(t)=Ax(t)+f(t),\qquad t\in\mathbb R.
\end{equation}
The \emph{bounded solutions problem} is the problem of finding bounded solution $x:\,\mathbb R\to\mathbb C^n$ under the assumption that the free term $f:\,\mathbb R\to\mathbb C^n$ is a bounded function.

\begin{theorem}[{\rm\cite[Theorem 4.1, p.~81]{Daletskii-Krein:eng}}]\label{t:Green}
Let $A$ be a complex matrix of the size $n\times n$.
Equation~\eqref{e:x'=Ax+f} has a unique bounded on $\mathbb R$ solution $x$ for any bounded continuous function $f$ if and only if the spectrum $\sigma(A)$ of $A$ does not intersect the imaginary axis.
This solution admits the representation
\begin{equation*}
x(t)=\int_{-\infty}^\infty \mathcal G(A,s)f(t-s)\,ds,
\end{equation*}
where
\begin{equation*}%\label{e:Green's function}
\mathcal G(A,t)=\frac1{2\pi i}\int_\Gamma g_t(\lambda)(\lambda\mathbf1-A)^{-1}\,d\lambda,\qquad t\neq0,
\end{equation*}
the contour $\Gamma$ encloses $\sigma(A)$, and $\mathbf1$ is the identity matrix.
\end{theorem}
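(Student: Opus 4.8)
The plan is the classical spectral-splitting argument, arranged so that the formula for $\mathcal G$ in the statement drops out directly. Assume first that $\sigma(A)\cap i\mathbb R=\emptyset$. I would split $\mathbb C^n=X_-\oplus X_+$ into the Riesz spectral subspaces of $A$ attached to the parts of $\sigma(A)$ in the open left and the open right half-planes, with commuting projections $P_-$ and $P_+=\mathbf1-P_-$; then $\sigma(A|_{X_-})\subset\{\Real\lambda<0\}$ and $\sigma(A|_{X_+})\subset\{\Real\lambda>0\}$, so there are constants $M\ge1$, $\omega>0$ with $\lVert e^{At}P_-\rVert\le Me^{-\omega t}$ for $t\ge0$ and $\lVert e^{At}P_+\rVert\le Me^{\omega t}$ for $t\le0$. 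Set
\[
x(t)=\int_{-\infty}^{t}e^{A(t-s)}P_-f(s)\,ds-\int_{t}^{\infty}e^{A(t-s)}P_+f(s)\,ds .
\]
From the two exponential bounds both integrals converge absolutely and $\lVert x\rVert_\infty\le(2M/\omega)\lVert f\rVert_\infty$; differentiating under the integral sign (the two boundary contributions at $s=t$ add to $P_-f(t)+P_+f(t)=f(t)$) gives $x'(t)=Ax(t)+f(t)$; and the change of variable $s\mapsto t-s$ rewrites $x(t)$ as $\int_{\mathbb R}\mathcal G(A,s)f(t-s)\,ds$ with $\mathcal G(A,s)=e^{As}P_-$ for $s>0$ and $\mathcal G(A,s)=-e^{As}P_+$ for $s<0$.

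It remains to identify this $\mathcal G$ with the contour integral. Choose $\Gamma=\Gamma_-\cup\Gamma_+$, where $\Gamma_-$ is a contour enclosing the part of $\sigma(A)$ in $\{\Real\lambda<0\}$ and lying in that half-plane, and $\Gamma_+$ is the analogous contour in $\{\Real\lambda>0\}$ (admissible precisely because $\sigma(A)$ avoids the imaginary axis). On $\Gamma_-$ one has $\Real\lambda<0$, so $g_t(\lambda)=\exp_{t}^{+}(\lambda)$, equal to $e^{\lambda t}$ for $t>0$ and to $0$ for $t<0$; the Riesz--Dunford functional calculus gives $\frac1{2\pi i}\int_{\Gamma_-}e^{\lambda t}(\lambda\mathbf1-A)^{-1}\,d\lambda=e^{At}P_-$, so the $\Gamma_-$-part of $\mathcal G(A,t)$ is $e^{At}P_-$ for $t>0$ and $0$ for $t<0$. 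Symmetrically, on $\Gamma_+$ one has $g_t(\lambda)=\exp_{t}^{-}(\lambda)$ and the $\Gamma_+$-part equals $-e^{At}P_+$ for $t<0$ and $0$ for $t>0$. Adding the two pieces reproduces exactly the $\mathcal G$ found above, which establishes the representation.

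For uniqueness, if $x_1,x_2$ are bounded solutions for the same $f$, then $y=x_1-x_2$ solves $y'=Ay$, so $y(t)=e^{At}y(0)$; boundedness as $t\to+\infty$ forces $P_+y(0)=0$, and as $t\to-\infty$ forces $P_-y(0)=0$, hence $y\equiv0$. For the converse direction, if some $\lambda_0=i\xi\in\sigma(A)$ is purely imaginary, take an eigenvector $v$ of $A$ for $\lambda_0$; then $t\mapsto e^{i\xi t}v$ is a bounded nonzero solution of the homogeneous equation, which may be added to any solution, so the ``unique bounded solution'' property fails. This gives the equivalence.

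The step I expect to require the most care is the identification of the spectral-subspace expression for $\mathcal G$ with the contour-integral formula: one has to check that the Riesz projections belonging to $\Gamma_\pm$, together with the identity $\frac1{2\pi i}\int_{\Gamma_\pm}e^{\lambda t}(\lambda\mathbf1-A)^{-1}\,d\lambda=e^{At}P_\pm$, combine correctly with the piecewise-defined symbols $\exp_{t}^{\pm}$ on the two pieces of $\Gamma$, and that $\Gamma$ may legitimately be taken split and disjoint from the imaginary axis. The absolute-convergence estimates, the differentiation under the integral sign, and the uniqueness and necessity arguments are routine.
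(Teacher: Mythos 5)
The paper offers no proof of this theorem---it is imported verbatim from Daletskii--Krein \cite{Daletskii-Krein:eng}---so there is no in-paper argument to compare against; your proof is the standard one and is correct. The Riesz splitting $\mathbb C^n=X_-\oplus X_+$, the kernel $e^{At}P_-$ for $t>0$ and $-e^{At}P_+$ for $t<0$, the identification with the contour integral via the split contour $\Gamma=\Gamma_-\cup\Gamma_+$ on whose components $g_t$ reduces to $\exp_t^{+}$ and $\exp_t^{-}$, and the uniqueness/necessity arguments (with $f=0$ already witnessing the failure of uniqueness when a purely imaginary eigenvalue exists) are all handled properly, including the one genuinely delicate point you flag, namely that $\Gamma$ must be taken as two disjoint pieces avoiding the imaginary axis.
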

The function $\mathcal G$ is called~\cite{Daletskii-Krein:eng} \emph{Green's function} of the bounded solutions problem for equation~\eqref{e:x'=Ax+f}.

\begin{corollary}\label{c:A=B}
Let $A=Q^{-1}BQ$, where $Q$ is unitary. Then
\begin{equation*}
\lVert\mathcal G(A,t)\rVert_{2\to2}=\lVert\mathcal G(B,t)\rVert_{2\to2},\qquad t\neq0,
\end{equation*}
where $\lVert\cdot\rVert_{2\to2}$ is the matrix norm induced by the norm $\lVert z\rVert_2=\sqrt{|z_1|^2+\dots+|z_n|^2}$ on $\mathbb C^n$.
\end{corollary}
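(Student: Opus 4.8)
The plan is to read the result straight off the contour–integral formula for $\mathcal G$ given in Theorem~\ref{t:Green}. The starting observation is the elementary algebraic fact that a similarity passes through the resolvent: if $A=Q^{-1}BQ$, then $\lambda\mathbf1-A=Q^{-1}(\lambda\mathbf1-B)Q$ for every $\lambda\in\mathbb C$, hence $(\lambda\mathbf1-A)^{-1}=Q^{-1}(\lambda\mathbf1-B)^{-1}Q$ whenever $\lambda\notin\sigma(A)$. Since similar matrices have the same spectrum, $\sigma(A)=\sigma(B)$, so one and the same contour $\Gamma$ encloses both spectra and is simultaneously admissible in the formula for $\mathcal G(A,\cdot)$ and for $\mathcal G(B,\cdot)$.

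Next I would substitute this into the definition of Green's function:
\[
\mathcal G(A,t)=\frac1{2\pi i}\int_\Gamma g_t(\lambda)(\lambda\mathbf1-A)^{-1}\,d\lambda
=\frac1{2\pi i}\int_\Gamma g_t(\lambda)\,Q^{-1}(\lambda\mathbf1-B)^{-1}Q\,d\lambda.
\]
Pulling the constant matrices $Q^{-1}$ and $Q$ outside the integral — legitimate because the integral is a norm-limit of Riemann sums and matrix multiplication is continuous — gives $\mathcal G(A,t)=Q^{-1}\mathcal G(B,t)Q$ for every $t\neq0$.

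Finally I would invoke the unitary invariance of the spectral norm: for any matrix $M$ and any unitary $Q$ one has $\lVert Q^{-1}MQ\rVert_{2\to2}=\lVert Q^{*}MQ\rVert_{2\to2}=\lVert M\rVert_{2\to2}$, since $\lVert Qz\rVert_2=\lVert z\rVert_2$ for all $z$ and likewise for $Q^{*}$. Taking $M=\mathcal G(B,t)$ yields $\lVert\mathcal G(A,t)\rVert_{2\to2}=\lVert\mathcal G(B,t)\rVert_{2\to2}$, which is the claim.

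There is essentially no hard step here; the only points deserving a word of care are that the same contour serves both matrices (guaranteed by $\sigma(A)=\sigma(B)$) and that the similarity may be moved in and out of the contour integral. The role of this corollary is to make Theorem~\ref{t:fin} effective: combined with the Schur decomposition $A=Q^{-1}BQ$ with $B$ triangular, it reduces the estimation of $\lVert\mathcal G(A,t)\rVert_{2\to2}$ to the triangular case.
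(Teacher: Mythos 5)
Your proof is correct and follows essentially the same route as the paper: the paper simply invokes the well-known identity $f(A)=Q^{-1}f(B)Q$ for analytic functions (which you re-derive from the resolvent identity and the contour integral) and then uses unitary invariance of the $\lVert\cdot\rVert_{2\to2}$ norm. Your version just spells out the standard details that the paper leaves implicit.
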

\begin{proof}
It is well known that for any analytic function $f$
\begin{equation*}
f(A)=Q^{-1}f(B)Q.\qed
\end{equation*}
\renewcommand\qed{}
\end{proof}

\section{The estimate}\label{s:estimate}

\begin{proposition}\label{p:ini}
For any square matrices $A$ and $B$ of the same size with the spectrum disjoint from the imaginary axis
\begin{equation*}
\mathcal G(A,t)-\mathcal G(B,t)=\int_{-\infty}^{+\infty}\mathcal G(A,t)(A-B)\mathcal G(B,t)\,dt.
\end{equation*}
\end{proposition}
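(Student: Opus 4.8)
The identity should be read with a fresh integration variable on the right, that is, as
\[
\mathcal G(A,t)-\mathcal G(B,t)=\int_{-\infty}^{+\infty}\mathcal G(A,t-s)(A-B)\mathcal G(B,s)\,ds ;
\]
in this form it is simply the Hilbert (resolvent) identity for the solution operators of Theorem~\ref{t:Green}, transported to the level of kernels. Indeed, if $G_A$ denotes the operator $f\mapsto\int_{-\infty}^{\infty}\mathcal G(A,s)f(\,\cdot\,-s)\,ds$, then by Theorem~\ref{t:Green} it is the inverse of $\tfrac{d}{dt}-A$ on the bounded continuous functions, and since $(\tfrac{d}{dt}-B)-(\tfrac{d}{dt}-A)=A-B$ one gets $G_A-G_B=G_A(A-B)G_B$; writing out the three convolutions produces the asserted formula. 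I would, however, carry this out concretely via the Fourier transform, which keeps everything elementary.

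First I would record that $\mathcal G(A,\cdot)$ decays exponentially away from $t=0$: from the contour representation, $\lVert\mathcal G(A,t)\rVert\le C\max_{\lambda\in\Gamma}|g_t(\lambda)|=O(e^{-\delta|t|})$ because $\Gamma$ stays a fixed distance from the imaginary axis; in particular $\mathcal G(A,\cdot)\in L^1(\mathbb R)$, and likewise for $B$. Next I would compute the Fourier transform: choosing $\Gamma=\Gamma^+\cup\Gamma^-$ so that $\Gamma^\pm$ encloses the part of $\sigma(A)$ in the right, resp. left, half-plane (so $g_t$ is analytic on $\Gamma$), one has the one-line calculation $\int_{-\infty}^{\infty}e^{-i\omega t}g_t(\lambda)\,dt=(i\omega-\lambda)^{-1}$ for every $\lambda$ with $\Real\lambda\neq0$ — the integral converges precisely because $\Real\lambda\neq0$, and the cases $\Real\lambda>0$ and $\Real\lambda<0$ give the same value. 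Interchanging the $t$- and $\lambda$-integrals by Fubini (legitimate by the decay just noted and boundedness of the resolvent on $\Gamma$) and invoking the Riesz functional calculus for $\lambda\mapsto(i\omega-\lambda)^{-1}$, which is analytic inside $\Gamma$ because $i\omega\notin\sigma(A)$ and $\Gamma$ avoids the imaginary axis, yields
\[
\widehat{\mathcal G(A,\cdot)}(\omega):=\int_{-\infty}^{\infty}e^{-i\omega t}\mathcal G(A,t)\,dt=(i\omega\mathbf1-A)^{-1},\qquad\omega\in\mathbb R,
\]
and the same for $B$.

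For real $\omega$ the matrices $i\omega\mathbf1-A$ and $i\omega\mathbf1-B$ are invertible, and the resolvent identity gives
\[
(i\omega\mathbf1-A)^{-1}-(i\omega\mathbf1-B)^{-1}=(i\omega\mathbf1-A)^{-1}(A-B)(i\omega\mathbf1-B)^{-1}.
\]
The right-hand side is the product of $\widehat{\mathcal G(A,\cdot)}(\omega)$, the constant matrix $A-B$, and $\widehat{\mathcal G(B,\cdot)}(\omega)$, hence the Fourier transform of $\mathcal G(A,\cdot)*\bigl((A-B)\mathcal G(B,\cdot)\bigr)$; the left-hand side is $\widehat{\mathcal G(A,\cdot)-\mathcal G(B,\cdot)}(\omega)$. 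Both sides being Fourier transforms of $L^1$-functions (a convolution of $L^1$-functions is $L^1$), injectivity of the Fourier transform on $L^1$ gives $\mathcal G(A,t)-\mathcal G(B,t)=\int_{-\infty}^{\infty}\mathcal G(A,t-s)(A-B)\mathcal G(B,s)\,ds$ for almost every $t$; since the convolution is continuous (it is an $L^1$-function convolved with a bounded one) and the left-hand side is continuous for $t\neq0$, the equality holds for every $t\neq0$.

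There is no deep obstacle; the only care needed is the bookkeeping in the middle step — justifying the interchange of integrals and handling the piecewise definition of $g_t$ together with the (possibly disconnected) contour $\Gamma$ — together with having the exponential-decay/$L^1$ estimate of $\mathcal G$ on hand to legitimize the Fourier-analytic argument. If one prefers to avoid transforms altogether, the identical proof runs on the contour side: substitute the $\Gamma$-representations of $\mathcal G(A,\cdot)$ and $\mathcal G(B,\cdot)$ into the convolution, perform the inner $s$-integral via $\int_{-\infty}^{\infty}g_{t-s}(\lambda)g_s(\mu)\,ds=\dfrac{g_t(\lambda)-g_t(\mu)}{\lambda-\mu}$ (the transform-free version of the computation above), and evaluate the resulting double contour integral by residues after the partial-fraction split of $\dfrac1{\lambda-\mu}(\lambda\mathbf1-A)^{-1}(A-B)(\mu\mathbf1-B)^{-1}$; this again reduces to the matrix resolvent identity.
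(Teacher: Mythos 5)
Your proof is correct, and you rightly flag the misprint in the statement (the integration variable on the right-hand side must differ from $t$; the paper's own proof ends with $\int_{-\infty}^{\infty}\mathcal G(A,s)(A-B)\mathcal G(B,t-s)\,ds$, which agrees with your form after the substitution $s\mapsto t-s$). Your route, however, is genuinely different from the paper's. The paper proves the proposition as a chain of three citations to the authors' companion papers: a perturbation formula $\mathcal G(A,t)-\mathcal G(B,t)=\frac1{2\pi i}\int_\Gamma g_t(\lambda)(\lambda\mathbf1-A)^{-1}(A-B)(\lambda\mathbf1-B)^{-1}\,d\lambda$, its re-expression as a double contour integral against the divided difference $g_t^{[1]}$, and finally the identification of that double integral with the convolution --- the last step resting on exactly the identity $\int_{-\infty}^{\infty}g_{t-s}(\lambda)g_s(\mu)\,ds=g_t^{[1]}(\lambda,\mu)$ that you sketch in your closing paragraph as the ``transform-free'' alternative. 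Your main argument instead stays entirely on the Fourier side: exponential decay of $\mathcal G(A,\cdot)$, the computation $\widehat{\mathcal G(A,\cdot)}(\omega)=(i\omega\mathbf1-A)^{-1}$ (with $\Gamma$ split into half-plane components so that $g_t$ is analytic on the enclosed regions, and with the pole of $\lambda\mapsto(i\omega-\lambda)^{-1}$ lying on the imaginary axis, hence outside $\Gamma$), the scalar resolvent identity, and injectivity of the Fourier transform on $L^1$ together with continuity off $t=0$. What your version buys is a self-contained, elementary proof that makes the proposition independent of the two arXiv references; what the paper's route buys is that the divided-difference and double-contour-integral machinery is the natural setting for the block-triangular and two-operator generalizations the authors pursue elsewhere, and it never needs $L^1$ Fourier inversion. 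All the analytic checkpoints you list (Fubini on $\Gamma\times\mathbb R$, the two half-plane cases of $\int e^{-i\omega t}g_t(\lambda)\,dt=(i\omega-\lambda)^{-1}$, continuity of the convolution) are the right ones and go through.
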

\begin{proof}
We organize the proof as a sequence of references. By~\cite[Corollary 47]{Kurbatov-Kurbatova-Oreshina},
\begin{equation*}
\mathcal G(A,t)-\mathcal G(B,t)=\frac1{2\pi i}\int_\Gamma g_t(\lambda)(\lambda\mathbf1-A)^{-1}(A-B)(\lambda\mathbf1-B)^{-1}\,d\lambda,
\end{equation*}
where the contour $\Gamma$ surrounds $\sigma(A)\cup\sigma(B)$. By~\cite[Theorem 45]{Kurbatov-Kurbatova-Oreshina},
\begin{multline*}
\frac1{2\pi i}\int_\Gamma g_t(\lambda)(\mathbf1-A)^{-1}(A-B)(\mathbf1-B)^{-1}\,d\lambda\\=
\frac1{(2\pi i)^2}\int_{\Gamma_1}\int_{\Gamma_2} g_t^{[1]}(\lambda,\mu)(\lambda\mathbf1-A)^{-1}(A-B)(\mu\mathbf1-B)^{-1}\,d\mu\,d\lambda,
\end{multline*}
where $g_t^{[1]}$ is the divided difference of the function $g_t$, $\Gamma_1$ surrounds $\sigma(A)$, and $\Gamma_2$ surrounds $\sigma(B)$. We recall that the \emph{divided difference}~\cite{Gelfond:eng,Jordan} of a function $f$ is the function
\begin{equation}\label{e:f[1]}
f^{[1]}(\lambda,\mu)= \begin{cases}
\frac{f(\lambda)-f(\mu)}{\lambda-\mu}, & \text{if $\lambda\neq\mu$},\\
f'(\lambda), & \text{if $\lambda=\mu$}.
 \end{cases}
\end{equation}
For more about $g_t^{[1]}$, see, e.g.,~\cite{Kurbatov-Kurbatova-CMAM18}.
By~\cite[Theorem 23]{Kurbatov-Kurbatova-triangular1},
\begin{multline*}
\frac1{(2\pi i)^2}\int_{\Gamma_1}\int_{\Gamma_2} g_t^{[1]}(\lambda,\mu)(\lambda\mathbf1-A)^{-1}(A-B)(\mu\mathbf1-B)^{-1}\,d\mu\,d\lambda\\
=\int_{-\infty}^{\infty}\mathcal G(A,s)(A-B)\mathcal G(B,t-s)\,ds.\qed
\end{multline*}
\renewcommand\qed{}
\end{proof}

The main result of this paper is the following theorem.

\begin{theorem}\label{t:fin}
Let a complex triangular matrix $B$ have the size $n\times n$ and be represented as the sum $B=D+N$ of a diagonal matrix $D$ and a strictly triangular matrix $N$. Let $\gamma^->0$ and $\gamma^+>0$ be chosen so that the strip $\{z\in\mathbb C:\,-\gamma^+<\Real z<\gamma^-\,\}$ is disjoint from the spectrum $\sigma(B)$ of the matrix $B$.
Let the norm on the space of matrices be induced by a norm on $\mathbb C^n$ and possesses the property $\lVert D\rVert\le\max_i|d_{ii}|$ for any diagonal matrix $D$.
Then
\begin{equation}\label{e:est}
\lVert \mathcal G(B,t)\rVert
\le\bigl(\heaviside(t)e^{-\gamma^-t}+\heaviside(-t)e^{\gamma^+t}\bigr)\sum_{k=0}^{n-1}\lVert N\rVert^{k}
\frac{|t|^{k}
\sqrt{\gamma|t|}\,e^{\gamma|t|/2} K_{-k-\frac{1}{2}}\left(\frac{1}{2}
\gamma|t|\right)}{\sqrt{\pi}k!},
\end{equation}
where $K$ is the modified Bessel function of the second kind~\cite{Watson95:eng}, $\heaviside$ is the Heaviside function
\begin{equation*}
\heaviside(t)=
\begin{cases}
1& \text{ if $t>0$}, \\
0& \text{ if $t<0$}
\end{cases}
\end{equation*}
and
\begin{equation*}
\gamma=\gamma^-+\gamma^+.
\end{equation*}
\end{theorem}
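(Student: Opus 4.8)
The plan is to treat the strictly triangular part $N$ as a perturbation of the diagonal part $D$ and to combine a variation-of-parameters identity with the nilpotency of $N$, in the spirit of Van~Loan's proof of~\eqref{e:Loan est}. First I would apply Proposition~\ref{p:ini} with the pair $(D,B)$ in place of $(A,B)$; its right-hand side is, as its proof shows, the convolution $\int_{-\infty}^{\infty}\mathcal G(D,s)(D-B)\mathcal G(B,t-s)\,ds$, so with $D-B=-N$ we obtain, for $t\neq0$,
\begin{equation*}
\mathcal G(B,t)=\mathcal G(D,t)+\int_{-\infty}^{\infty}\mathcal G(D,s)\,N\,\mathcal G(B,t-s)\,ds .
\end{equation*}
Since $\sigma(B)$ avoids the imaginary axis, both $\mathcal G(D,\cdot)$ and $\mathcal G(B,\cdot)$ are bounded away from $t=0$ and decay exponentially as $t\to\pm\infty$, so every convolution below converges absolutely. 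Substituting this identity into its own right-hand side and iterating, the remainder after enough rounds carries $n$ factors $N$ sandwiched among diagonal matrices; a product of $n\times n$ triangular matrices (all of the orientation of $B$) of which $n$ are strictly triangular is zero, so this remainder vanishes and the expansion terminates:
\begin{equation*}
\mathcal G(B,t)=\sum_{k=0}^{n-1}\int_{\mathbb R^{k}}\mathcal G(D,s_1)\,N\cdots N\,\mathcal G(D,s_k)\,N\,\mathcal G(D,t-s_1-\dots-s_k)\,ds_1\cdots ds_k ,
\end{equation*}
the $k=0$ summand being $\mathcal G(D,t)$ and the $k$th summand being an iterated convolution of $k$ copies of $\mathcal G(D,\cdot)N$ with $\mathcal G(D,\cdot)$.

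Next I would record a scalar envelope for the diagonal Green's function. Since $D=\operatorname{diag}(d_{11},\dots,d_{nn})$, Theorem~\ref{t:Green} yields $\mathcal G(D,s)=\operatorname{diag}\bigl(g_s(d_{11}),\dots,g_s(d_{nn})\bigr)$, and the hypothesis on the norm gives $\lVert\mathcal G(D,s)\rVert\le\max_i|g_s(d_{ii})|$. Reading $g_s$ off its definition and using that every $d_{ii}$ lies outside the strip, one finds $|g_s(d_{ii})|\le e^{-\gamma^- s}$ for $s>0$ and $|g_s(d_{ii})|\le e^{\gamma^+ s}$ for $s<0$, hence
\begin{equation*}
\lVert\mathcal G(D,s)\rVert\le h(s),\qquad h(s):=\heaviside(s)\,e^{-\gamma^- s}+\heaviside(-s)\,e^{\gamma^+ s}\in L^{1}(\mathbb R).
\end{equation*}
Applying submultiplicativity of the norm to each summand of the expansion above and pulling out the $k$ factors $\lVert N\rVert$ gives
\begin{equation*}
\lVert\mathcal G(B,t)\rVert\le\sum_{k=0}^{n-1}\lVert N\rVert^{k}\int_{\mathbb R^{k}}h(s_1)\cdots h(s_k)\,h(t-s_1-\dots-s_k)\,ds_1\cdots ds_k=\sum_{k=0}^{n-1}\lVert N\rVert^{k}\,h^{*(k+1)}(t),
\end{equation*}
where $h^{*(k+1)}$ denotes the $(k{+}1)$-fold convolution of $h$ with itself.

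It remains to identify $h^{*(k+1)}$, which is the computational core of the proof. I would pass to the Fourier transform:
\begin{equation*}
\widehat h(\xi)=\frac{1}{\gamma^-+i\xi}+\frac{1}{\gamma^+-i\xi}=\frac{\gamma}{(\gamma^-+i\xi)(\gamma^+-i\xi)}=\frac{\gamma}{\bigl(\xi+i\tfrac{\gamma^+-\gamma^-}{2}\bigr)^{2}+(\gamma/2)^{2}},
\end{equation*}
so $\widehat{h^{*(k+1)}}=\widehat h^{\,k+1}$, and inverting reduces to evaluating $\int_{\mathbb R}\bigl[(\xi+i\mu)^{2}+\rho^{2}\bigr]^{-(k+1)}e^{i\xi t}\,d\xi$ with $\mu=\tfrac{\gamma^+-\gamma^-}{2}$ and $\rho=\gamma/2$. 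Because $|\mu|<\rho$ — which is precisely the positivity $\gamma^\pm>0$ — the line of integration may be shifted to the real axis without crossing a pole of the integrand, producing a factor $e^{\mu t}$ and the classical integral $\int_{\mathbb R}(\xi^{2}+\rho^{2})^{-(k+1)}e^{i\xi t}\,d\xi$, a standard table integral expressible through $K_{k+1/2}(\rho|t|)$. Collecting exponentials, $e^{\mu t}$ combines with the $e^{-\rho|t|}$ factor of $K_{k+1/2}(\rho|t|)$ to reproduce exactly $h(t)$, while the remaining constant together with the polynomial part of the half-integer Bessel function assembles, using $K_{-k-\frac12}=K_{k+\frac12}$, into
\begin{equation*}
h^{*(k+1)}(t)=h(t)\cdot\frac{|t|^{k}\,\sqrt{\gamma|t|}\,e^{\gamma|t|/2}\,K_{-k-\frac12}\!\bigl(\tfrac12\gamma|t|\bigr)}{\sqrt{\pi}\,k!};
\end{equation*}
substituting this into the previous display gives exactly~\eqref{e:est}. (Equivalently, this convolution identity can be proved by induction on $k$ from $h^{*(k+1)}=h*h^{*k}$; the cases $k=0$ and $k=1$ are quick checks, the latter giving $h^{*2}(t)=h(t)\,(|t|+2/\gamma)$.)

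I expect the only real difficulty to lie in this last step — carrying out the contour shift rigorously and pinning down the normalising constant in the Bessel integral so that the half-integer closed form of $K$ matches the precise right-hand side of~\eqref{e:est}; note that the bracket $\heaviside(t)e^{-\gamma^- t}+\heaviside(-t)e^{\gamma^+ t}$ appearing in~\eqref{e:est} is this same function $h$. The remaining ingredients — the Duhamel iteration, the vanishing of the tail by nilpotency, and the elementary estimate $\lVert\mathcal G(D,s)\rVert\le h(s)$ — are routine.
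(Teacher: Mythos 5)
Your proposal is correct and follows essentially the same route as the paper: the Duhamel-type identity from Proposition~\ref{p:ini} iterated until the nilpotency of $N$ (interleaved with triangular factors of the same orientation) kills the remainder, the scalar envelope $\lVert\mathcal G(D,s)\rVert\le h(s)$, and the reduction of the bound to the convolution powers $h^{*(k+1)}$. The only divergence is in evaluating $h^{*(k+1)}$: the paper expands $\widehat{h}^{\,k}$ by partial fractions, inverts term by term and resums the double sum with a CAS, whereas your shift of the integration contour by $i(\gamma^+-\gamma^-)/2$ to reach the single table integral $\int_{\mathbb R}\bigl(\xi^2+(\gamma/2)^2\bigr)^{-(k+1)}e^{i\xi t}\,d\xi$ arrives at the same closed form~\eqref{e:est of h} (your check $h^{*2}(t)=h(t)\bigl(|t|+2/\gamma\bigr)$ agrees with it) in a somewhat more transparent way.
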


\begin{remark}\label{r:2->2}
With the help of Corollary~\ref{c:A=B}, in the case of the $\lVert\cdot\rVert_{2\to2}$ norm, Theorem~\ref{t:fin} can be applied to any matrix $A$ provided its triangular representation $A=Q^{-1}BQ$ is known.
\end{remark}

\begin{remark}\label{r:BesselK}
It can be shown that the function $t\mapsto\dfrac{t^{k}\sqrt{\gamma t}\,e^{\gamma t/2} K_{-k-\frac{1}{2}}\left(\frac{1}{2}
\gamma t\right)}{\sqrt{\pi}k!}$ is a polynomial of degree $k$ with the leading term $\dfrac{t^{k}}{k!}$. For example, if $B$ is normal, then $N=0$ and, thus,
\begin{equation*}
\sum_{k=0}^{n-1}\lVert N\rVert^{k}\frac{|t|^{k}
\sqrt{\gamma|t|}\,e^{\gamma|t|/2} K_{-k-\frac{1}{2}}\left(\frac{1}{2}
\gamma|t|\right)}{\sqrt{\pi}k!}=\lVert N\rVert^{0}
\frac{|t|^{0}
\sqrt{\gamma|t|}\,e^{\gamma|t|/2} K_{-\frac{1}{2}}\left(\frac{1}{2}
\gamma|t|\right)}{\sqrt{\pi}0!}=1.
\end{equation*}
\end{remark}

\begin{remark}\label{r:Vin-Gro}
Another similar estimate of Green's function is proved in~\cite{Kurbatov-Kurbatova-QTDS18}:
\begin{align*}
\Vert\mathcal G(A,t)\Vert&\le e^{-\gamma^-t}\sum_{j=0}^{m-1}
\sum_{i=0}^j\binom{l+i-1}{l-1}\frac{t^{j-i}}{(j-i)!}
\frac{(2\Vert A\Vert)^{l+j}}{\gamma^{l+i}},& t&>0,\\
\Vert\mathcal G(A,t)\Vert&\le e^{\gamma^+t}\sum_{j=0}^{l-1}
\sum_{i=0}^j\binom{m+i-1}{m-1}\frac{t^{j-i}}{(j-i)!}
\frac{(2\Vert A\Vert)^{m+j}}{\gamma^{m+i}},& t&<0.
\end{align*}
Here $m$ (respectively, $l$) is the number of eigenvalues of $A$ counted according to their algebraic multiplicities that lie in the open left (right) half-plane. In this estimate the maximal powers of $t$ are $m-1$ and $l-1$ whereas the highest power of $t$ in~\eqref{e:est} is $n-1$. On the other hand, estimate from~\cite{Kurbatov-Kurbatova-QTDS18} contains the factors $\lVert A\rVert^{k}$ instead of $\lVert N\rVert^{k}$ in~\eqref{e:est}, which may be essentially smaller; for example, $N=0$ if $A$ is normal.
\end{remark}

\begin{proof}[Proof of Theorem~\ref{t:fin}]
From Proposition~\ref{p:ini} it follows that
\begin{equation}\label{e:G(D+N):1}
\mathcal G(B,t)=\mathcal G(D+N,t)=\mathcal G(D,t)+\int_{-\infty}^{\infty}\mathcal G(D+N,s)N\mathcal G(D,t-s)\,ds.
\end{equation}
We substitute this representation into itself:
\begin{align*}
\mathcal G(B,t)&=\mathcal G(D,t)+\int_{-\infty}^{\infty}
\mathcal G(D,s_1)N\mathcal G(D,t-s_1)\,ds_1\\
&+
\int_{-\infty}^{\infty}\int_{-\infty}^{\infty}\mathcal G(D+N,s_2)N\mathcal G(D,s_1-s_2)
N\mathcal G(D,t-s_1)\,ds_2\,ds_1.
\end{align*}
Then we substitute~\eqref{e:G(D+N):1} again into the obtained formula several times:
\begin{equation}\label{e:G(B,t)=}
\begin{split}
\mathcal G(B,t)&=\mathcal G(D,t)+\int_{-\infty}^{\infty}
\mathcal G(D,s_1)N\mathcal G(D,t-s_1)\,ds_1\\
&+
\int_{-\infty}^{\infty}\int_{-\infty}^{\infty}\mathcal G(D,s_2)N\mathcal G(D,s_1-s_2)
N\mathcal G(D,t-s_1)\,ds_2\,ds_1\\
&+
\int_{-\infty}^{\infty}\int_{-\infty}^{\infty}\int_{-\infty}^{\infty}\mathcal G(D,s_3)N\mathcal G(D,s_2-s_3)\\
&\times N\mathcal G(D,s_1-s_2)
N\mathcal G(D,t-s_1)\,ds_3\,ds_2\,ds_1+\dots\\
&+
\int_{-\infty}^{\infty}\dots\int_{-\infty}^{\infty}\mathcal G(D,s_{n-1})N\mathcal G(D,s_{n-2}-s_{n-1})\dots\\
&\times N\mathcal G(D,s_1-s_2)
N\mathcal G(D,t-s_1)\,ds_{n-1}\dots\,ds_1.
 \end{split}
\end{equation}
The subsequent terms are zero because of the following reason.
Since the matrix $D$ is diagonal, the matrix function $\mathcal G(D,\cdot)$ is also diagonal. The matrix $N$ is strictly triangular. Therefore the diagonal of $N\mathcal G(\cdot\cdot)N\mathcal G(\cdot)$ closest to the main one is zero, and so on.

From representation~\eqref{e:G(B,t)=} it follows that
\begin{align*}
\lVert \mathcal G(B&,t)\rVert\le\lVert\mathcal G(D,t)\rVert+
\lVert N\rVert\int_{-\infty}^{\infty}
\lVert \mathcal G(D,s_1)\rVert\lVert \mathcal G(D,t-s_1)\rVert\,ds_1\\
&+
\lVert N\rVert^2
\int_{-\infty}^{\infty}\int_{-\infty}^{\infty}\lVert \mathcal G(D,s_2)\rVert
\lVert \mathcal G(D,s_1-s_2)\rVert
\lVert \mathcal G(D,t-s_1)\rVert\,ds_2\,ds_1+\dots\\
&+
\lVert N\rVert^{n-1}\int_{-\infty}^{\infty}\dots\int_{-\infty}^{\infty}\lVert \mathcal G(D,s_{n-1})\rVert
\lVert \mathcal G(D,s_{n-2}-s_{n-1})\rVert\dots\\
&\times \lVert \mathcal G(D,s_1-s_2)\rVert
\lVert \mathcal G(D,t-s_1)\rVert\,ds_{n-1}\dots\,ds_1.
\end{align*}

We set
\begin{equation*}
h(t)=
\begin{cases}
e^{-\gamma^-t} & \text{ for $t\ge0$ }, \\
e^{\gamma^+t} & \text{ for $t\le0$ }.
\end{cases}
\end{equation*}
Since the matrix $D$ is diagonal,
\begin{equation*}
\lVert\mathcal G(D,t)\rVert\le h(t).
\end{equation*}
Therefore,
\begin{align*}
\lVert \mathcal G(D&+N,t)\rVert\le h(t)+
\lVert N\rVert\int_{-\infty}^{\infty}h(s_1)h(t-s_1)\,ds_1\\
&+
\lVert N\rVert^2
\int_{-\infty}^{\infty}\int_{-\infty}^{\infty}h(s_2)h(s_1-s_2)h(t-s_1)\,ds_2\,ds_1+\dots\\
&+
\lVert N\rVert^{n-1}\int_{-\infty}^{\infty}\dots\int_{-\infty}^{\infty}
h(s_{n-1})h(s_{n-2}-s_{n-1})\dots h(t-s_1)\,ds_{n-1}\dots\,ds_1.
\end{align*}
We denote the $k$-fold convolution of $h$ with itself by $h^{*k}$:
\begin{equation*}
h^{*k}(t)=\int_{-\infty}^{\infty}\dots\int_{-\infty}^{\infty}
h(s_{k-1})h(s_{k-2}-s_{k-1})\dots h(t-s_1)\,ds_{k-1}\dots\,ds_1.
\end{equation*}
With this notation, the previous estimate takes the from
\begin{equation*}
\lVert\mathcal G(B,t)\rVert\le\sum_{k=0}^{n-1}\lVert N\rVert^{k}h^{*(k+1)}(t).
\end{equation*}

Next we calculate $h^{*k}(t)$. We do some calculations with the help of `Mathematica'~\cite{Wolfram}.

Clearly, the Fourier transform $\hat h$ of $h$ is
\begin{equation*}
\hat h(\omega)=\frac1{\gamma^-+i\omega}+\frac1{\gamma^+-i\omega}.
\end{equation*}
Therefore the Fourier transform $\widehat{h^{*k}}$ of $h^{*k}$ is
\begin{equation*}
\widehat{h^{*k}}(\omega)=h^k(\omega)=\Bigl(\frac1{\gamma^-+i\omega}+\frac1{\gamma^+-i\omega}\Bigr)^k
=\sum_{j=0}^k\binom{k}{j}\frac1{(\gamma^+-i\omega)^j}\frac1{(\gamma^-+i\omega)^{k-j}}.
\end{equation*}

Further we have
\begin{align*}
\frac1{(\gamma^+-i\omega)^j}\frac1{(\gamma^-+i\omega)^{k-j}}&=
\sum_{q=0}^{k-j-1}\binom{j+q-1}{q}\frac1{\gamma^{j+q}(\gamma^-+i\omega)^{k-j-q}}\\
&+\sum_{q=0}^{j-1}\binom{k-j+q-1}{q}\frac1{\gamma^{k-j+q}(\gamma^+-i\omega)^{j-q}}.
\end{align*}
Consequently,
\begin{align*}
\widehat{h^{*k}}(\omega)
&=\sum_{j=0}^k\binom{k}{j}\frac1{(\gamma^+-i\omega)^j}\frac1{(\gamma^-+i\omega)^{k-j}}\\
&=\sum_{j=0}^k\binom{k}{j}
\sum_{q=0}^{k-j-1}\binom{j+q-1}{q}\frac1{\gamma^{j+q}(\gamma^-+i\omega)^{k-j-q}}\\
&+\sum_{j=0}^k\binom{k}{j}\sum_{q=0}^{j-1}\binom{k-j+q-1}{q}\frac1{\gamma^{k-j+q}(\gamma^+-i\omega)^{j-q}}\\
\end{align*}

It is well known that the inverse Fourier transform takes the functions
\begin{equation*}
\omega\mapsto\frac1{(\gamma^-+i\omega)^{k-j-q}},\qquad
\omega\mapsto\frac1{(\gamma^+-i\omega)^{j-q}},
\end{equation*}
respectively, to the functions
\begin{equation*}
t\mapsto\heaviside(t)\,\frac{t^{k-j-q-1}}{(k-j-q-1)!}e^{-\gamma^-t},\qquad
t\mapsto-\heaviside(-t)\,\frac{(-1)^{j-q}t^{j-q-1}}{(j-q-1)!}e^{\gamma^+t}.
\end{equation*}
Hence
\begin{align*}
h^{*k}(t)&=e^{-\gamma^-t}\,\heaviside(t)\sum_{j=0}^k\binom{k}{j}
\sum_{q=0}^{k-j-1}\binom{j+q-1}{q}\frac1{\gamma^{j+q}}
\frac{t^{k-j-q-1}}{(k-j-q-1)!}\\
&-e^{\gamma^+t}\,\heaviside(-t)\sum_{j=0}^k\binom{k}{j}
\sum_{q=0}^{j-1}\binom{k-j+q-1}{q}\frac1{\gamma^{k-j+q}}
\frac{(-1)^{j-q}t^{j-q-1}}{(j-q-1)!}.
\end{align*}
We change the order of summation:
\begin{align*}
h^{*k}(t)
&=e^{-\gamma^-t}\,\heaviside(t)\sum_{p=0}^{k-1}\sum_{j=0}^{p}
\binom{k}{j}\binom{p-1}{p-j}\frac1{\gamma^{p}}
\frac{t^{k-p-1}}{(k-p-1)!}\\
&-e^{\gamma^+t}\,\heaviside(-t)\sum_{p=1}^k\sum_{q=0}^{k-p}
\binom{k}{p+q}\binom{k-p-1}{q}\frac1{\gamma^{k-p}}
\frac{(-1)^{p}t^{p-1}}{(p-1)!}.
\end{align*}
First, we calculate the internal sums (here we essentially use `Mathematica'):
\begin{align*}
h^{*k}(t)
&=e^{-\gamma^-t}\,\heaviside(t)\sum_{p=0}^{k-1}
\frac{t^{k-p-1}(k+p-1)!}{\gamma^{p}\,(k-1)!p!(k-p-1)!}\\
&-e^{\gamma^+t}\,\heaviside(-t)\sum_{p=1}^k
\frac{(-1)^{k-p} t^{k-p-1}(k+p-1)!
}{\gamma^{p}\,(k-1)!p!(k-p-1)!}.
\end{align*}
Then, by calculating the final sums (here we again essentially use `Mathematica'), we arrive at
\begin{equation}\label{e:est of h}
\begin{split}
h^{*k}(t)
&=e^{-\gamma^-t}\,\heaviside(t)\frac{t^{k-1}
\sqrt{\gamma t}\,e^{\frac{1}{2}\gamma t} K_{\frac{1}{2}-k}\left(\frac{1}{2}
\gamma t\right)}{\sqrt{\pi }(k-1)!}\\
&+e^{\gamma^+t}\,\heaviside(-t)(-1)^{k-1}\frac{t^{k-1}\sqrt{-\gamma t}\,e^{-\frac{1}{2}\gamma t} K_{\frac{1}{2}-k}\left(-\frac{1}{2}\gamma
   t\right)}{\sqrt{\pi }(k-1)!}\\
&=h(t)\frac{|t|^{k-1}
\sqrt{\gamma|t|}\,e^{\gamma|t|/2} K_{\frac{1}{2}-k}\left(\frac{1}{2}
\gamma|t|\right)}{\sqrt{\pi}(k-1)!}.\qed
\end{split}
\end{equation}
\renewcommand\qed{}
\end{proof}

\begin{remark}\label{r:limit}
When $\gamma^+\to+\infty$, we have $e^{\gamma^+t}\to0$ for $t<0$ and (which is calculated in `Mathematica')
\begin{equation*}
\lim_{\gamma^+\to+\infty}\frac{|t|^{k}
\sqrt{\gamma|t|}\,e^{\gamma|t|/2} K_{-k-\frac{1}{2}}\left(\frac{1}{2}
\gamma|t|\right)}{\sqrt{\pi}k!}=1.
\end{equation*}
Therefore estimate~\eqref{e:est} turns into the estimate from~\cite{Van_Loan77}:
\begin{equation*}
\lVert\mathcal G(B,t)\rVert\le\heaviside(t) e^{-\gamma^-t}\sum_{k=0}^{n-1}\frac{\lVert N\rVert^{k}t^{k}}{k!}.
\end{equation*}
\end{remark}

Sometimes $\lVert N^k\rVert$ may decrease faster than $\lVert N\rVert^k$. In such a case it may be more convenient to use the following variant of Theorem~\ref{t:fin}.

%A norm on $\mathbb C^n$ is called~\cite[p. 6]{Meyer-Nieberg91},~\cite{Gil-MPAG-08} a \emph{lattice norm} if $\lVert f\rVert\le\lVert h\rVert$ whenever $|f|\le|h|$.

%~\cite{Gil-MPAG-08}In the sequel the norm in $X$ is a lattice norm. That is, $\lVert f\rVert\le\lVert h\rVert$ whenever $|f|\le|h|$, cf.~\cite[p. 6]{Meyer-Nieberg91}.By $\lVert A\rVert$ the induced norm is denoted. So, $\lVert h\rVert\le\lVert |h|\rVert$ and $\lVert A\rVert\le\lVert |A|\rVert$.
%A := supx?X
%Ax/x. So h = |h| and A  |A|.

\begin{corollary}\label{c:5}
Let the norm on the space of matrices be induced by the norm $\lVert z\rVert_1=|z_1|+\dots+|z_n|$ or the norm $\lVert z\rVert_\infty=\max_i|z_i|$ on $\mathbb C^n$. Then under the assumptions of Theorem~\ref{t:fin}
\begin{equation*}
|\mathcal G(B,t)|
\le\bigl(\heaviside(t)e^{-\gamma^-t}+\heaviside(-t)e^{\gamma^+t}\bigr)\sum_{k=0}^{n-1} |N|^{k}
\frac{|t|^{k}
\sqrt{\gamma|t|}\,e^{\gamma|t|/2} K_{-k-\frac{1}{2}}\left(\frac{1}{2}
\gamma|t|\right)}{\sqrt{\pi}k!},
\end{equation*}
where $|A|$ is the matrix consisting of absolute values $|a_{ij}|$ of the entries $a_{ij}$ of a matrix $A$, and the inequality $A\le B$ for matrices means the corresponding inequalities for their entries.
\end{corollary}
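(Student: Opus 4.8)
The plan is to repeat the argument of the proof of Theorem~\ref{t:fin} with the entrywise absolute value $|\cdot|$ playing the role of the norm $\|\cdot\|$, and with the submultiplicativity $\|AB\|\le\|A\|\,\|B\|$ replaced by the entrywise bound $|AB|\le|A|\,|B|$, which follows at once from the triangle inequality for $(AB)_{ij}=\sum_k a_{ik}b_{kj}$ and holds for arbitrary matrices. I would first note that for the norms induced by $\|\cdot\|_1$ and $\|\cdot\|_\infty$ every diagonal matrix $D$ satisfies $\|D\|=\max_i|d_{ii}|$, so the hypotheses of Theorem~\ref{t:fin} are in force and its proof is available; in particular I would keep the exact finite identity~\eqref{e:G(B,t)=}, valid because $\mathcal G(D,\cdot)$ is diagonal and $N$ is strictly triangular, so that all terms with more than $n-1$ factors $N$ vanish.

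Applying $|\cdot|$ to both sides of~\eqref{e:G(B,t)=} and using $|A+B|\le|A|+|B|$ together with $|AB|\le|A|\,|B|$, I would obtain
\begin{equation*}
|\mathcal G(B,t)|\le\sum_{k=0}^{n-1}\int_{-\infty}^{\infty}\!\!\cdots\int_{-\infty}^{\infty}|\mathcal G(D,s_k)|\,|N|\,|\mathcal G(D,s_{k-1}-s_k)|\cdots|N|\,|\mathcal G(D,t-s_1)|\,ds_k\cdots ds_1,
\end{equation*}
with all matrices under the integral signs entrywise nonnegative. Since $\mathcal G(D,t)$ is diagonal with diagonal entries $g_t(d_{ii})$ and, by the choice of $\gamma^-$ and $\gamma^+$, $|g_t(d_{ii})|\le h(t)$ (where $h$ is the function introduced in the proof of Theorem~\ref{t:fin}), one has $|\mathcal G(D,t)|\le h(t)\mathbf1$ entrywise. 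A product of entrywise-nonnegative matrices is monotone in each factor, so I would replace each factor $|\mathcal G(D,\cdot)|$ by $h(\cdot)\mathbf1$, pull the scalars out, and use $\mathbf1\,|N|=|N|\,\mathbf1=|N|$ to reach, entrywise,
\begin{equation*}
|\mathcal G(B,t)|\le\sum_{k=0}^{n-1}|N|^k\int_{-\infty}^{\infty}\!\!\cdots\int_{-\infty}^{\infty}h(s_k)h(s_{k-1}-s_k)\cdots h(t-s_1)\,ds_k\cdots ds_1=\sum_{k=0}^{n-1}|N|^k\,h^{*(k+1)}(t).
\end{equation*}
It then remains to insert the closed form~\eqref{e:est of h}, with $k$ replaced by $k+1$, that is $h^{*(k+1)}(t)=h(t)\,\dfrac{|t|^{k}\sqrt{\gamma|t|}\,e^{\gamma|t|/2}K_{-k-\frac12}\bigl(\frac12\gamma|t|\bigr)}{\sqrt{\pi}\,k!}$, together with the identity $h(t)=\heaviside(t)e^{-\gamma^-t}+\heaviside(-t)e^{\gamma^+t}$ valid for $t\neq0$; this is precisely the asserted inequality.

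I do not anticipate a real obstacle, since the whole argument transcribes the proof of Theorem~\ref{t:fin} line by line. The only points requiring attention are of a bookkeeping nature: one must check that every matrix appearing under the integral signs is entrywise nonnegative, so that monotonicity under both matrix multiplication and integration is legitimate, and one should observe that in general $|N^k|\le|N|^k$ with possibly strict inequality, which is exactly why this variant is not contained in Theorem~\ref{t:fin}. The hypothesis that the norm is induced by $\|\cdot\|_1$ or $\|\cdot\|_\infty$ is what makes the variant worthwhile: for these norms $0\le A\le B$ entrywise implies $\|A\|\le\|B\|$ and $\||N|\|=\|N\|$, so taking norms in the displayed estimate yields $\|\mathcal G(B,t)\|\le\sum_{k=0}^{n-1}\||N|^k\|\,h^{*(k+1)}(t)$, a bound that never exceeds the one of Theorem~\ref{t:fin} and may be much smaller when the entrywise powers $|N|^k$ decay quickly.
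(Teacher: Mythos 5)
Your proposal is correct and follows essentially the same route as the paper: apply the entrywise absolute value to the finite expansion~\eqref{e:G(B,t)=}, use $|A+B|\le|A|+|B|$ and $|AB|\le|A|\,|B|$, bound each diagonal factor by $h(\cdot)\mathbf1$, collect the powers $|N|^k$ against the convolutions $h^{*(k+1)}$, and finish with the closed form~\eqref{e:est of h}. The only cosmetic difference is that you make the entrywise-nonnegativity and monotonicity checks explicit, which the paper leaves implicit.
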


\begin{remark}\label{r:Loan}
The idea of using the power $|N|^k$ was used in~\cite[Theorem 11.2.2]{Golub-Van_Loan96:eng} for the estimating of the general function of a matrix.
\end{remark}

\begin{proof}
Both the norms in question possess the evident properties: $|A+B|\le|A|+|B|$, $|A\cdot B|\le|A|\cdot|B|$, $\lVert A\rVert=\lVert |A|\rVert$, and $\lVert |A|\rVert\le\lVert|B|\rVert$ if $|A|\le|B|$. Hence, from~\eqref{e:G(B,t)=} we have
\begin{align*}
|\mathcal G(B,t)|&\le|\mathcal G(D,t)|+\int_{-\infty}^{\infty}
|\mathcal G(D,s_1)N\mathcal G(D,t-s_1)|\,ds_1\\
&+
\int_{-\infty}^{\infty}\int_{-\infty}^{\infty}|\mathcal G(D,s_2)N\mathcal G(D,s_1-s_2)
N\mathcal G(D,t-s_1)|\,ds_2\,ds_1\\
%&+
%\int_{-\infty}^{\infty}\int_{-\infty}^{\infty}\int_{-\infty}^{\infty}|\mathcal G(D,s_3)N\mathcal G(D,s_2-s_3)\\
%&\times N\mathcal G(D,s_1-s_2)
%N\mathcal G(D,t-s_1)|\,ds_3\,ds_2\,ds_1+\dots\\
&+
\int_{-\infty}^{\infty}\dots\int_{-\infty}^{\infty}|\mathcal G(D,s_{n-1})N\mathcal G(D,s_{n-2}-s_{n-1})\dots\\
&\times N\mathcal G(D,s_1-s_2)
N\mathcal G(D,t-s_1)|\,ds_{n-1}\dots\,ds_1.
\end{align*}
Therefore,
\begin{align*}
|\mathcal G(B,t)|&\le h(t)\mathbf1+\int_{-\infty}^{\infty}
(h(s_1)\mathbf1)|N|(h(t-s_1)\mathbf1)\,ds_1\\
&+
\int_{-\infty}^{\infty}\int_{-\infty}^{\infty}(h(s_2)\mathbf1)|N|(h(s_1-s_2)\mathbf1)
|N|(h(t-s_1)\mathbf1)\,ds_2\,ds_1\\
&+
\int_{-\infty}^{\infty}\dots\int_{-\infty}^{\infty} (h(s_{n-1})\mathbf1)|N|(h(s_{n-2}-s_{n-1})\mathbf1)\dots\\
&\times |N|(h(s_1-s_2)\mathbf1)
|N|(h(t-s_1)\mathbf1)\,ds_{n-1}\dots\,ds_1.
\end{align*}
Or
\begin{align*}
|\mathcal G(B,t)|&\le h(t)\mathbf1+\int_{-\infty}^{\infty}h(s_1)h(t-s_1)
|N|\,ds_1\\
&+
\int_{-\infty}^{\infty}\int_{-\infty}^{\infty}h(s_2)h(s_1-s_2)h(t-s_1)
|N|^2\,ds_2\,ds_1\\
&+
\int_{-\infty}^{\infty}\dots\int_{-\infty}^{\infty}h(s_{n-1})(h(s_{n-2}-s_{n-1})\dots h(s_1-s_2)|N|^{n-1}\,ds_{n-1}\dots\,ds_1\\
&=\sum_{k=0}^{n-1}|N|^{k} h^{*(k+1)}(t).
\end{align*}
Now the proof follows from estimate~\eqref{e:est of h}.
\end{proof}

\providecommand{\bysame}{\leavevmode\hbox to3em{\hrulefill}\thinspace}
\providecommand{\MR}{\relax\ifhmode\unskip\space\fi MR }
% \MRhref is called by the amsart/book/proc definition of \MR.
\providecommand{\MRhref}[2]{%
  \href{http://www.ams.org/mathscinet-getitem?mr=#1}{#2}
}
\providecommand{\href}[2]{#2}

\end{document}